\newcommand{\R}{{\mathbb R}}
\newcommand{\D}{{\mathbb D}}
\newcommand{\N}{{\mathbb N}}
\renewcommand{\k}{{\bf k}}
\newcommand{\e}{{\bf e}_1}
\newcommand{\ed}{{\bf e}_2}
\newcommand{\Log}{\operatorname{Log}}
\newcommand{\calF}{\mathcal{F}}
\newcommand{\calH}{\mathcal{H}}
\newcommand{\frakG}{\mathfrak{G}}
\newcommand{\frakK}{\mathfrak{K}}
\newcommand{\frakP}{\mathfrak{P}}
\newcommand{\frakQ}{\mathfrak{Q}}
\theoremstyle{plain}
\newtheorem{theorem}{Theorem}[section]
\newtheorem{lemma}[theorem]{Lemma}
\theoremstyle{definition}
\newtheorem{definition}[theorem]{Definition}
\newtheorem{example}[theorem]{Example}
\theoremstyle{remark}
\newtheorem{remark}[theorem]{Remark}
\newcommand{\wt}{\widetilde}
\definecolor{org1}{RGB}{67, 132, 252}
\begin{document}

\title{Extensions of the Shannon Entropy and the Chaos Game Algorithm to Hyperbolic Numbers Plane}
\author[$\dagger$]{G. Y. Téllez-Sánchez}
\author[$\star$]{J. Bory-Reyes}
\date{}
  
\affil[$\dagger$]{
\footnotesize
Escuela Superior de Fisica y Matem\'aticas

Instituto Polit\'ecnico Nacional

Edif. 9, 1er piso, U.P. Adolfo L\'opez Mateos

07338, Mexico City,  MEXICO

gtellez.wolf@gmail.com
\vspace{3mm}
}
 
\affil[$\star$]{
\footnotesize
Escuela Superior de Ingenieria Mec\'anica y El\'ectrica

Instituto Polit\'ecnico Nacional

Edif. 5, 3er piso, U.P. Adolfo L\'opez Mateos

07338, Mexico City, MEXICO

juanboryreyes@yahoo.com}

\maketitle

\begin{abstract}
In this paper we provide extensions to hyperbolic numbers plane of the classical Chaos game algorithm and the Shannon entropy. Both notions connected with that of probability with values in hyperbolic number, introduced by D. Alpay et al \cite{eluna}. Within this context, particular attention has been paid to the interpretation of the hyperbolic valued probabilities and the hyperbolic extension of entropy as well.
\vspace{0.3cm}

\small{
\noindent
\textbf{Keywords.} Hyperbolic numbers, Chaos game, Entropy, Probability.\\
\noindent
\textbf{Mathematics Subject Classification.} 30G35, 28A80, 28D20.} 
\end{abstract}

\section{INTRODUCTION}
In 1848 James Cockle \cite{cockle} introduced the Tessarines, which form an algebra isomorphic to the bicomplex numbers \cite{segre}. Hyperbolic numbers, also known as the Lorentz numbers or double numbers are a particular type of the Tessarines. The complex numbers and the hyperbolic numbers are two-dimensional vector spaces over the real numbers, so each can identified with points in the plane $\R^2$.

The hyperbolic numbers can be considered as an hybrid between the real and complex numbers, in part because they behave in more similar nature to the real numbers, meanwhile having properties related with the complex numbers. Due to the significance of this fact for our purposes, we have compiled some basic notation and terminology in Section \ref{hypnum}. For a deeper discussion of this phenomenon we refer the reader to \cite{lusha, vignaux, sob1, sob2}.

Given a finite discrete probability distribution, the amount of uncertainty of the distribution, that is, the amount of uncertainty concerning the outcome of an experiment, the possible results of which have the given probabilities is a very important quantity called the entropy of the distribution and plays a key role in many aspects of statistical mechanics and information theory.

The idea of measure the average missing information for a given physical system associated with each possible data value, as the negative logarithm of the probability mass function for the value, was presented by Claude Elwood Shannon in his seminal paper \cite{shannon}. He proved (original proof goes back to the work of Erd\"os in \cite{erdos}) that logarithm is the only function fulfills three basic requisites for defining an entropy-like measure (see below). Section \ref{hypprobent} discusses these assumptions in the hyperbolic numbers case. 

During the past few years, several extensions of Shannon's original work have resulted in many alternative measures of entropy. For instance, by relaxing the third of Shannon's requirements, that of additive property, Alfréd Renyi \cite{renyi} was able to extend Shannon entropy to a continuous parametric family of entropy measures. Another way of stating a refinement of the requirements for entropy measures and so, the systems of postulate characterizing axiomatically these entropies, were given by different authors, see for instance \cite{fadeev, navasa, vest, acda, ebsasa, a1, a2}. 

Since the general acceptance that a mathematical expression of entropy measures are interesting enough, several theoretical attempts have been made to propose different examples of such expression (see for instance \cite{ubriaco, karci, T, E, EM, fu}). In the vast majority of cases, the key to extending entropies expressions has been the introduction of a revised logarithm.

Proceeding further in this direction, in Section \ref{hypprobent}, associated with sets of hyperbolic valued probabilities two extensions of Shannon entropy (called weak and strong hyperbolic entropy) are introduced by exploiting the hyperbolic logarithm. A generalization of probability space and basic properties in hyperbolic numbers plane is due to D. Alpay et al \cite{eluna}. 

The construction of Cantor like sets was generalized into the context of hyperbolic numbers in \cite{cantls, tebo}, where the partial order structure of such numbers was utilized. With the notion of iterated function systems (IFS for short) on hyperbolic numbers plane (see \cite{tebo2}) associating probabilities to every function of the IFS (as Barnsley made in \cite{barnsley}) a chaos game algorithm for hyperbolic numbers is proposed in Section \ref{hypchaos}, where the interpretation of the hyperbolic-valued probabilities meaning play a crucial role.

\section{HYPERBOLIC NUMBERS}
\label{hypnum}
The set of hyperbolic numbers or hyperbolic numbers plane is defined as the ring
\begin{displaymath}
\D := \R[\k] = \{ a + kb \ | \ a, b \in \R\},
\end{displaymath}
where $\k \not\in \R$ and $\k^2 = 1$. It is a commutative ring with zero divisor. Two important elements are 
\begin{displaymath}
\e = \frac{1}{2}(1 + \k), \quad \ed = \frac{1}{2}(1 - \k). 
\end{displaymath}
These elements are zero divisors, idempotent,  mutually annullable ($\e \ed = 0$) and can generate the whole hyperbolic plane as
\begin{displaymath}
\D = \R\e + \R\ed.
\end{displaymath}
The set of zero divisors is well located as $\frakG = \R\e \cup \R\ed$ and $\frakG_0 = \frakG \cup \{0\}$. The correspondence 
\begin{equation}\label{correspondence}
x \mapsto x\e + x\ed =: \wt{x}
\end{equation}
embeds $\R$ in $\D$.

The hyperbolic numbers are equipped with a partial order $\preceq$ defined by 
\begin{displaymath}
\alpha \preceq \beta\    \mbox{if and only if}\    a_1 \leq b_1\ \mbox{and}\   a_2 \leq b_2,
\end{displaymath}
where $\alpha = a_1\e + a_2\ed$, $\beta = b_1\e + b_2\ed$. Consequently, an hyperbolic interval may be defined as the set
\begin{displaymath}
[\alpha, \beta]_{\k} := \{ \xi \in \D \  |\  \alpha \preceq \xi \preceq \beta\}.
\end{displaymath}

Additionally the set of positive hyperbolic numbers is defined as
\begin{displaymath}
\D^+ := \{ \xi \in \D \ | \ \xi \succeq 0 \}.
\end{displaymath}
This set is closed under multiplication.

Every hyperbolic valued function $F: X \rightarrow \D$ defined in $X\subset \D$ is determined by its components:
\begin{equation}
\label{idem}
F(x) = F_1(x)\e + F_2(x)\e,\ x \in X,
\end{equation}
where $F_1$ and $F_2$ are real valued functions. Formula (\ref{idem}) is called the idempotent representation of $F$.

An hyperbolic metric space is a pair $(X, D)$, where $X$ is a no-empty set and $D$ is an hyperbolic positive valued function such that  for any $x, y, z \in X$ the following holds:
\begin{enumerate}[1. ]
\item $D(x, y) = 0$ if and only if $x = y$.
\item $D(x, y) = D(y, x)$.
\item $D(x, z) \preceq D(x, y) + D(y, z)$.
\end{enumerate}

The much familiar hyperbolic metric space is $(\D, D_{\k})$ where $D_{\k}$ is defined for all $\xi, \chi \in \D$; $\xi = x_1\e + x_2\ed$ and $\chi = y_1\e + y_2\ed$, such that
\begin{displaymath}
D_{\k}(\xi, \chi) := |x_1 - y_1|\e + |x_2 - y_2|\ed.
\end{displaymath}
An hyperbolic metric space is called complete if every Cauchy's sequence of hyperbolic numbers is convergent in the sense of the hyperbolic metric, see \cite{tebo}.

We say that the function $F:(X, D_1) \rightarrow (Y, D_2)$ is continuous in $\xi_0$, if for every $\epsilon \in \D^+$, there exists a $\delta \in \D^+$ such that for every $\xi \in \D$ with $D_1(\xi, \xi_0) \preceq \delta$, then 
\begin{displaymath}
D_2(F(\xi), F(\xi_0)) \preceq \epsilon.
\end{displaymath}
When both spaces $(X, D_1),  (Y, D_2)$ are equal to $(\D, D_{\k})$, the hyperbolic continuity implies the real one over $(\R^2, d_u)$, where $d_u$ is the standard metric.

A function $F:(X, D_1) \rightarrow (Y, D_2)$ defined between two hyperbolic metric spaces is called a contraction function if there exists $\kappa \in [0, \wt{1})_{\k}$ such that for every $x, y \in X$,
\begin{displaymath}
D_2(F(x), F(y)) \preceq \kappa D_1(x, y).
\end{displaymath}

Let $F:(\D, D_{\k}) \rightarrow (\D, D_{\k})$, we say that $F$ is derivable in $\xi_0 \in \D$, if there exists the limit
\begin{equation}
\label{limhol}
F'(\xi_0) :=\lim\limits_{\psi \to 0 \atop \psi \not \in \frakG_0} \frac{{F(\xi_0 + \psi) - F(\psi)}}{\psi}.
\end{equation}

In \cite{vignaux} was proved that every hyperbolic derivable function $F = u + \k v$ in the base $\{1, \k\}$ satisfies the Cauchy-Riemann type system 
\begin{displaymath}
\frac{\partial u}{\partial x} (\xi_0) = \frac{\partial v}{\partial y}(\xi_0) \text{ and } \frac{\partial u}{\partial y}(\xi_0) = \frac{\partial v}{\partial x}(\xi_0).
\end{displaymath}

It is worth noting that the last assertion implies that given a derivable function $F = F_1\e + F_2\ed$, if $\xi_0 = x_{0, 1}\e + x_{0, 2}\ed$, then
\begin{displaymath}
F(\xi_0) = F_1(x_{0,1})\e + F_2(x_{0,2})\ed.
\end{displaymath}

On the other hand, in \cite{tebo2} was shown that \ref{limhol} implies that the derivative of $F = F_1\e + F_2\ed$ is given by
\begin{displaymath}
F'(\xi_0) = \frac{\partial F_1}{\partial x_{0,1}}(\xi_0)\e + \frac{\partial F_2}{\partial x_{0,2}}(\xi_0)\ed.
\end{displaymath}

We follow \cite{vignaux} in assuming that the partial derivatives of $F$ are indeed total real derivatives and depend only of their respective components, i.e., 
\begin{displaymath}
F'(\xi_0) = \frac{d F_1}{d x_{0,1}}(x_{0, 1})\e + \frac{d F_2}{d x_{0,2}}(x_{0, 2})\ed.
\end{displaymath}

\section{HYPERBOLIC VALUED PROBABILITIES\\ AND ENTROPY}
\label{hypprobent}
Let  ${\mathcal P}= (p_1, p_2,......, p_m)$ be a finite discrete probability distribution, that is, suppose that $p_j \geq 0$, $j=1,\cdots, m$ and $\sum^{m}_{j=1}p_j=1$. Let $\mathcal F = \{f_1, ..., f_m\}$ be a   sample description space of an experiment relates to ${\mathcal P}$ in the way that the probability that occurs $f_j$ is $p_j$.

Entropy is a concept that measures how many information we can get from a probability distribution. An equivalent interpretation is given by Rényi in \cite{renyi}, where he says that entropy is the amount of uncertainty concerning the outcome of an experiment. It is measured by the quantity
\begin{displaymath}
H(\mathcal P) = -\sum_{j = 1}^{m} p_j \log p_j = \sum_{j = 1}^{m} p_j \log \frac{1}{p_j}.
\end{displaymath}

This formula was introduced by Shannon in \cite{shannon}. He established that a function to measure the entropy must fulfill with 
\begin{enumerate}
\item $H$ should be continuous in every $p_j \in \mathcal P$.
\item If all $p_j$ are equal, then $H$ should be a monotonic increasing function.
\item Given another set of probabilities $\mathcal Q$, then $H({\mathcal P}{\mathcal Q}) = H(\mathcal P) + H(\mathcal Q)$.
\end{enumerate}
One function having theses properties is called a entropy function. Later R\'enyi gave a much more simple proof (see \cite{renyi}) that just logarithm function has these properties by the next lemma.
\begin{lemma}[\cite{erdos}]
\label{lm:entropy}
Let $f:\N \rightarrow \R$ an additive number-theoretical function, that is
\begin{displaymath}
f(nm) = f(n) + f(m),
\end{displaymath}
for every $n, m \in \N$, and
\begin{displaymath}
\lim\limits_{n \to \infty} (f(n + 1) - f(n)) = 0.
\end{displaymath}
Then we have 
\begin{displaymath}
f(n) = c \log n
\end{displaymath}
where $c$ is a constant.
\end{lemma}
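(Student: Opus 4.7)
The plan is to proceed in three stages: extract the basic algebraic consequences of additivity, prove that $f(p)/\log p$ takes a common value $c$ across all primes $p$, and then conclude by prime factorization.

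The first stage is immediate. Setting $n = m = 1$ in the additivity relation gives $f(1) = 0$, and a routine induction then yields $f(n^k) = k f(n)$ for every $n, k \in \N$. Hence $f$ is completely determined by its values on the primes, and once I know that $f(p) = c \log p$ for each prime $p$ with a single constant $c$, the identity $f(n) = c \log n$ for general $n$ will follow by writing $n = \prod p_i^{a_i}$ and applying additivity: $f(n) = \sum a_i f(p_i) = c \sum a_i \log p_i = c \log n$.

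The heart of the argument will be the second stage. Fix two distinct primes $p$ and $q$. Since $\log p / \log q$ is irrational, continued-fraction approximation produces sequences of exponents $(m_j, n_j) \to \infty$ for which $p^{m_j}$ and $q^{n_j}$ are close, in the precise sense that $|p^{m_j} - q^{n_j}|$ is controllably small compared to $p^{m_j}$. I would then try to leverage the hypothesis $f(r+1) - f(r) \to 0$, by summing it along the integers between $p^{m_j}$ and $q^{n_j}$, to deduce $m_j f(p) - n_j f(q) = f(p^{m_j}) - f(q^{n_j}) \to 0$. Because simultaneously $m_j \log p - n_j \log q \to 0$, comparing the two statements identifies the slopes and forces $f(p)/\log p = f(q)/\log q$.

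The main obstacle will be exactly this telescoping step. The hypothesis gives only pointwise smallness of consecutive differences, which on its own does not control a sum over an interval of length comparable to $p^{m_j}$. To close the gap I would attempt to bootstrap the additive identity $f(ab) = f(a) + f(b)$ along short coprime decompositions, relating $f(r+1) - f(r)$ at a generic large $r$ to values of $f$ at much smaller integers via the prime factorization of $r+1$ (and of neighbours like $r(r+2)$ versus $(r+1)^2$), thereby upgrading the pointwise vanishing of the differences into a uniform estimate over long ranges. This is the delicate analytic-combinatorial core handled in Erd\"os's original argument \cite{erdos}, and I expect it to dominate the technical work; once it is in place, the rest of the proof is essentially bookkeeping.
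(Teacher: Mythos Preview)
The paper does not supply its own proof of this lemma: it is quoted as a classical result of Erd\H{o}s (and the text remarks that R\'enyi later gave a simpler argument), and is used as a black box. So there is no in-paper proof to compare your proposal against.

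As for the proposal itself, your reductions are sound: $f(1)=0$, $f(n^k)=kf(n)$, and the final deduction that $m_j f(p)-n_j f(q)\to 0$ together with $m_j\log p-n_j\log q\to 0$ forces $f(p)/\log p=f(q)/\log q$ is a correct limiting argument once you divide through by $n_j$. You have also correctly located the real difficulty: continued-fraction approximation only gives $|p^{m_j}-q^{n_j}|$ of order roughly $p^{m_j}/n_j$, which is still enormous, so a naive telescope $\sum (f(r+1)-f(r))$ over that range gives nothing. Your plan to overcome this by bootstrapping the multiplicative identity along factorizations of nearby integers is exactly the mechanism in Erd\H{o}s's paper, but you do not carry it out; as written, the proposal is an accurate roadmap that explicitly outsources its one hard step to \cite{erdos}. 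That is consistent with how the present paper treats the lemma, but it is not a self-contained proof.
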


\subsection{Hyperbolic probability}
In \cite{eluna} was introduced the notion of a probabilistic measure which takes values in hyperbolic numbers. They show that this new measure satisfies the usual properties of a probability. These ideas led to propose the hyperbolic concept of entropy if one consider a finite discrete hyperbolic probability distribution. 
\begin{definition}\label{hpd}
Let $\frakP = \{ \rho_1, ..., \rho_n\}$ be a finite set of hyperbolic numbers of $[0, \wt {1}]_{\k}$ given in idempotent representation $\rho_k := p_{k, 1}\e + p_{k, 2}\ed$. We say that $\frakP$ is an hyperbolic probability distribution, if we have one of these states
\begin{enumerate}[(1)]
\item $\displaystyle \sum_{k = 1}^{n} \rho_k = \wt{1},$
\item $\displaystyle \sum_{k = 1}^{n} \rho_k = 1\e,$
\item $\displaystyle \sum_{k = 1}^{n} \rho_k = 1\ed.$
\end{enumerate}
\end{definition}
\begin{remark}
Cases (2) and (3) imply that for every $k \in \{1, ..., n\}$, either $\rho_k = p\e$ or $\rho_k = p\ed$ where $p \in \R$. So $\rho_k$ is a zero divisor.  
\end{remark}

The main difference from the case of real probability distribution is that in general a set of hyperbolic probabilities can not be totally ordered, it is necessary to make additional assumptions, as was mentioned in \cite{cantls}.

Following the notion of product space probability, there exists a simple interpretation of hyperbolic valued probabilities. Indeed, an hyperbolic valued probabilities $\rho_k = p_{k, 1}\e + p_{k, 2}\ed$. could be identify with 
\begin{equation}
\label{probreal}
p_{k}:=\frac{p_{k, 1} + p_{k, 2}}{2},
\end{equation}
which is the usual accumulate probability in the real sense. An alternative interpretation will be explain in Section \ref{hypchaos}. 
\subsection{Hyperbolic Entropy}
We begin with a weak hyperbolic extension of the concept of Shannon entropy using the hyperbolic correspondence (see \ref{correspondence}) of real logarithm and the accumulated probabilities \ref{probreal}.
\begin{definition}
Let $\frakP= \{ \rho_1, ..., \rho_n \}$ be a hyperbolic probability distribution. The weak hyperbolic entropy associated to $\frakP$ is defined as 
\begin{displaymath}
{\bf H}_{w}(\frakP) :=  \sum_{k = 1}^{n} -\rho_k \wt{\log}\left(\frac{p_{k, 1}+ p_{k, 2}}{2}\right) =
\end{displaymath}
\begin{displaymath}
\sum_{k = 1}^{n} -\rho_k\left(\log\left(\frac{p_{k, 1} + p_{k, 2}}{2}\right) \e + \log\left(\frac{p_{k, 1} + p_{k, 2}}{2}\right)\ed\right).
\end{displaymath}
\end{definition}
Although weak Shannon entropy can be clearly seen as generalization, we realize that the functional form of the weak entropy expression resembles the Shannon entropy and does not change the measure in essence. 

We therefore propose to define an strong hyperbolic extension of the Shannon entropy, where the role played in the traditional theory by the logarithmic function, is now played by the well-defined holomorphic logarithm function in the hyperbolic theory, to be denoted in this work by $\Log_{\D}$. See \cite[Subsection 6.5]{eluna} for more details.

Firstly, let us state the hyperbolic version of Lemma \ref{lm:entropy}, whose proof follows by making appeal to the idempotent representation of the hyperbolic logarithm function.
\begin{lemma}
Let the hyperbolic valued function $F:\N \rightarrow \D$, such that 
\begin{displaymath}
F(mn) = F(m) + F(n)  
\end{displaymath}
and
\begin{displaymath}
\lim\limits_{n \to \infty} (F(n + 1) - F(n)) = 0,
\end{displaymath}
for every $n, m \in \N$. Then there exists $\kappa \in \D$ so that $F$ has the form
\begin{displaymath}
F(\N) = \kappa \Log_{\D}(\N).
\end{displaymath}
\end{lemma}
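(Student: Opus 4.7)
The plan is to reduce the statement to the real case by passing to the idempotent representation, apply the Erdős lemma componentwise, and then repackage the result using the hyperbolic logarithm.

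First I would write $F$ in its idempotent components as $F(n) = F_1(n)\e + F_2(n)\ed$, where $F_1, F_2 : \N \to \R$. Because $\e$ and $\ed$ are linearly independent over $\R$ and mutually annullable, the hyperbolic additive identity $F(mn) = F(m) + F(n)$ decouples into the two real identities $F_i(mn) = F_i(m) + F_i(n)$ for $i = 1,2$. Similarly, expanding $F(n+1) - F(n) = (F_1(n+1) - F_1(n))\e + (F_2(n+1) - F_2(n))\ed$ and using that convergence to $0$ in the hyperbolic metric $D_{\k}$ is equivalent to convergence to $0$ in each idempotent component, the limit hypothesis splits as $\lim_{n\to\infty}(F_i(n+1) - F_i(n)) = 0$ for $i = 1, 2$.

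Next, Lemma \ref{lm:entropy} applies to each $F_i$ and yields constants $c_1, c_2 \in \R$ such that $F_1(n) = c_1 \log n$ and $F_2(n) = c_2 \log n$. At this point one identifies the hyperbolic logarithm on the image of the embedding $\widetilde{\cdot}: \R \hookrightarrow \D$: for $n \in \N$, the idempotent representation recalled in Section \ref{hypnum} gives $\Log_{\D}(\widetilde{n}) = \log(n)\e + \log(n)\ed$. Hence, setting $\kappa := c_1\e + c_2\ed$ and multiplying out using $\e^2 = \e$, $\ed^2 = \ed$, $\e\ed = 0$, we obtain
\begin{displaymath}
\kappa \, \Log_{\D}(\widetilde{n}) = c_1 \log(n)\,\e + c_2 \log(n)\,\ed = F_1(n)\e + F_2(n)\ed = F(n),
\end{displaymath}
which is the desired expression (with $\N$ identified with its image in $\D$).

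The one step that requires any care is the componentwise translation of the two hypotheses; once that is done, the bulk of the work is absorbed into the already proved real statement of Lemma \ref{lm:entropy}. The main obstacle, if any, is really just bookkeeping: making sure the hyperbolic logarithm is applied to the embedded natural number $\widetilde{n}$ and that the factor $\kappa$ correctly reassembles the two real constants $c_1, c_2$ via the idempotent basis.
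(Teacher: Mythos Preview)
Your proof is correct and follows exactly the approach the paper indicates: the paper states only that the proof ``follows by making appeal to the idempotent representation of the hyperbolic logarithm function,'' and your argument carries this out in detail by decoupling both hypotheses componentwise, applying Lemma~\ref{lm:entropy} to each $F_i$, and reassembling via $\kappa = c_1\e + c_2\ed$.
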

In trying to define a strong extension of Shannon entropy to hyperbolic numbers plane, it is to be expected that the considered hyperbolic function satisfies related hyperbolic versions of the aforementioned axioms of Shannon entropy. To this aim, we propose an alternative axiomatization of the concept of strong hyperbolic entropy, which relaxed the continuity property and assuming the holomorphic condition to the hyperbolic entropy measure. This generalized entropy is defined as

\begin{definition}
The strong hyperbolic entropy associated to $\frak P$ is defined as 
\begin{displaymath}
{\bf H}_{s}(\frakP) := \sum_{k = 1}^{n} -\rho_k \Log_{\D}(\rho_k) = \sum_{k = 1}^{n} -\rho_k (\log(p_{k, 1}) \e + \log(p_{k, 2})\ed).
\end{displaymath}
\end{definition}

\begin{remark}
As the Shannon entropy measure, both hyperbolic extensions have fundamental properties which legitimate it as reasonable measure of choice. For instance: 
\begin{itemize}
\item Only when we are certain of the outcome does the measures vanish. Otherwise they are positive.
\item Both measures ${\bf H}_{w}$ and ${\bf H}_{s}$ are maximum and equal to one of theses values $log (n), log (n)e_1, log (n)e_2$ or $\Log_{\D}(n), \Log_{\D}(n)e_1, \Log_{\D}(n)e_2$ respectively, when all the $\rho_k$ are equal (e.g., $\displaystyle\frac{1}{n}, \displaystyle\frac{1}{n}e_1, \displaystyle\frac{1}{n}e_2)$, which is again the most uncertain situation.
\end{itemize}
\end{remark}

\section{HYPERBOLIC CHAOS GAME}
\label{hypchaos}
To make our exposition self-contained we give a brief sketch of the chaos game algorithm of creating a fractal in euclidean spaces (see \cite{barnsley}). The algorithm consists in the iteratively creation of a sequence of points, using a polygon and starting with a base random point inside it. Each point in the sequence is a given fraction of the distance between the previous point and one of the vertices of the polygon, which is chosen at random in each iteration.

In addition to illustrating how the algorithm work in practice, let us provide a well-known example
\begin{example}
\label{points}
Let a triangle with vertices at $a_1 = (0, 1)$, $a_2 = \left(-\displaystyle\frac{1}{2}, \displaystyle\frac{\sqrt{3}}{2}\right)$, $a_3 = \left(-\displaystyle\frac{1}{2}, -\displaystyle\frac{\sqrt{3}}{2} \right)$ and let $a_0 = (0, 0)$ be the basic point. Fig. \ref{fg:triang} illustrates the algorithm procedure for performing the fractal and some of its iterations. The result is an approximation of the well known Sierpinski Triangle.
\end{example}

\begin{figure}[ht]
\centering
\includegraphics[scale=1]{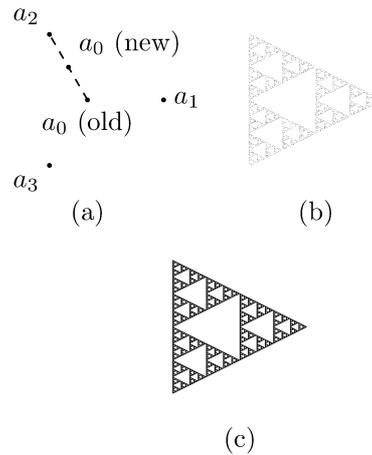}
\caption{(a)- Process to select the new base point. (b)- Outcome of five thousand of iterations. (c)- Outcome of one million of iterations.}
\label{fg:triang}
\end{figure}
To deals with general metric spaces, Barnslet \cite{barnsley} appealed to the concept of IFS with probabilities on metric spaces getting a generalization of the chaos game algorithm. For a treatment of a more general case we refer the reader to \cite{bavi}.

Here we set up some notation and terminology for completeness. An IFS on a complete metric space $(X, d)$ is a finite collection $\{f_1, ..., f_m \}$ of contractions over $X$, see \cite{barnsley}.

Let $\calH(X)$ the collection of every nonempty and compact subsets of $X$, so the Hutchinson operator $J:\calH(X) \rightarrow \calH(X)$ is defined over subsets $A \in \calH(X)$ as
\begin{displaymath}
J(A) = \bigcup_{j = 1}^{m} f_j(A).
\end{displaymath}

Due that $({\cal H}(X), h)$ is a complete metric space, when $h$ is the Hausdorff metric induced by the metric $d$, see \cite[Sec. 2.6]{barnsley}, the operator $J$ is a contraction and by Banach's fixed point theorem there exists a set ${\cal A} \in \cal{H}(X)$  that is the unique fixed point of $J$. This fixed point will be called the attractor of the IFS.

The chaos game algorithm for generating a point-set approximation of the IFS attractor works as follow: Let an IFS on a complete metric space $(X, d)$ with probabilities, i.e., an IFS $\{f_1, ..., f_m\}$ and a finite discrete probability distribution $\{p_1, ..., p_m\}$ such that each contraction $f_j$ is associated with $p_j$. Choose a base point $x_0 \in X$ and select contraction $f_j$ at random according to the probabilities $p_j$ and compute $x_{i+1} = f_j(x_i)$. Return to the second step and repeat the process iteratively.

To exemplify this algorithm, let us consider again the Sierpinski Triangle. Let $X = \R^2$ and $d$ the usual metric. The contractions are
\begin{displaymath}
f_1(x) = \frac{1}{2}x,\hspace{3mm} f_2(x) = \frac{1}{2}x + \left( \frac{1}{2}, 0 \right),
\end{displaymath}
\begin{displaymath}
f_3(x) = \frac{1}{2}x + \left(\frac{1}{4}, \frac{1}{2} \right),
\end{displaymath}
and all the probabilities $p_j,\  j= 1,2,3$ must be equal to $\displaystyle\frac{1}{3}$.

\subsection{Hyperbolic chaos game}
\label{sc:chaos}

It has already been remarked that the standard chaos game for a complete metric space only require an IFS with probabilities, and because both concepts were extended to hyperbolic metric spaces so it appear feasible to adapt the algorithm to the hyperbolic frame.
\begin{definition}[{\bf Hyperbolic chaos game}]
\label{defHypCG}
Let be $(X, D)$ a complete hyperbolic metric space and let $\calF = \{F_1, ..., F_n\}$ be a finite set of hyperbolic contractions. Set $\frakP$ an hyperbolic probability distribution (see Definition \ref{hpd}). We associate to every $F_k$ the real probability $p_k$ given by (\ref{probreal}). The algorithm runs taking a starting point $x_0 \in X$ and then choose randomly the contraction $F_{k} \in \calF$ on each iteration accordingly to the assigned probability $p_{k}$. The rest of the algorithm runs as before.
\end{definition}
If the hyperbolic probabilities are zero divisors, then we only have to consider the natural identification with $\R$.

\begin{remark}
Attractors of contractive hyperbolic IFS on complete hyperbolic metric spaces may not be unique because the respective $\D$-Hausdorff distance is not a metric, see \cite{tebo2}. Consequently, the hyperbolic chaos game represents a method of generating a representative element of the attractor class. 
\end{remark}

\subsection{$\D$-{chaos game}}
Now, we focus our study to modify the chaos game on the hyperbolic metric space $(\D, D_{\k})$. More precisely, treating the ordinary hyperbolic probability distribution as true hyperbolic numbers rather than real identifications.

\vspace{3mm}
{\noindent \bf Algorithm:} Let $\calF=\{F_1, ..., F_n\}$ be a finite set of contractions over $(\D, D_{\k})$ where $F_k = F_{k, 1}\e + F_{k, 2}\ed$ and let $\frakP$ be an hyperbolic probability distribution associated to $\calF$. The sets of idempotent components of $\calF$ will be denoted by $\Phi_1 = \{F_{1,1}, ..., F_{n, 1}\}$ and $\Phi_2 = \{F_{1, 2}, ..., F_{n, 2}\}$.

\begin{enumerate}[\hspace{3mm}1. ]
\item Chose a base point $\xi_0 \in \D$.
\item Randomly select two function $F_{s, 1} \in \Phi_1$ and $F_{t, 2} \in \Phi_2$, according to the real idempotent components $p_{s, 1}$ and $p_{t, 2}$ of the probabilities $\rho_s, \rho_t \in \frakP$ respectively and define the function $G_{s,t} = F_{s, 1}\e + F_{t, 2}\ed$.
\item Subsequently compute the next point as $\xi_{i + 1} = G_{s,t}(\xi_i)$.
\item The process is repeated from the step two iteratively a finite set of times.
\end{enumerate}

A natural question to ask is whether functions $G_{s,t}$  are also contractions. The answer is given by the next lemma.

\begin{lemma}
Let $F_1 = F_{1, 1}\e + F_{1, 2}\ed$ and $F_2 = F_{2, 1}\e + F_{2, 2}\ed$ be two hyperbolic contractions over $(\D, D_{\k})$ with contractive factors $\kappa_1 = c_{1, 1}\e + c_{1, 2}\ed$ and $\kappa_2 = c_{2, 1}\e + c_{2, 2}\ed$ respectively. Then function $G_{s,t} = F_{s, 1}\e + F_{t, 2}\ed$ is again a contraction over $(\D, D_{\k})$ and its contractible factor is $\kappa = c_{s, 1}\e + c_{t, 2}\ed$.
\end{lemma}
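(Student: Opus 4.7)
The plan is to exploit the idempotent decomposition of both the hyperbolic metric $D_{\k}$ and the contraction inequality, reducing the statement to two independent real estimates that can then be recombined in the mixed configuration defining $G_{s,t}$. Throughout I would use the standing convention from Section \ref{hypnum} that each idempotent component $F_{i,j}$ depends only on the corresponding real coordinate of its argument.

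First, I would write every ingredient in the $\{\e,\ed\}$-basis. For $\xi = x_1\e + x_2\ed$ and $\chi = y_1\e + y_2\ed$, the contraction hypothesis on $F_i$ reads
\begin{displaymath}
|F_{i,1}(x_1)-F_{i,1}(y_1)|\e + |F_{i,2}(x_2)-F_{i,2}(y_2)|\ed \preceq c_{i,1}|x_1-y_1|\e + c_{i,2}|x_2-y_2|\ed,
\end{displaymath}
which, because $\preceq$ is componentwise $\leq$ in this basis, is equivalent to the two real inequalities $|F_{i,1}(x_1)-F_{i,1}(y_1)| \leq c_{i,1}|x_1-y_1|$ and $|F_{i,2}(x_2)-F_{i,2}(y_2)| \leq c_{i,2}|x_2-y_2|$. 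In particular I would take the first of these with $i = s$ and the second with $i = t$, which is exactly the cross-selection of data needed for $G_{s,t}$.

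Next, I would compute $D_{\k}(G_{s,t}(\xi), G_{s,t}(\chi))$ directly. By the convention on idempotent components, $G_{s,t}(\xi) = F_{s,1}(x_1)\e + F_{t,2}(x_2)\ed$, so
\begin{displaymath}
D_{\k}(G_{s,t}(\xi), G_{s,t}(\chi)) = |F_{s,1}(x_1)-F_{s,1}(y_1)|\e + |F_{t,2}(x_2)-F_{t,2}(y_2)|\ed.
\end{displaymath}
Substituting the two selected real estimates and refactoring with $\e^2 = \e$, $\ed^2 = \ed$, and $\e\ed = 0$, the right-hand side becomes $(c_{s,1}\e + c_{t,2}\ed)(|x_1-y_1|\e + |x_2-y_2|\ed) = \kappa\, D_{\k}(\xi,\chi)$, which is precisely the claimed inequality.

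Finally, I would verify that $\kappa$ is an admissible contractive factor, namely $\kappa \in [0,\wt{1})_{\k}$. This is immediate: since $\kappa_1, \kappa_2 \in [0, \wt{1})_{\k}$ we have $c_{s,1}, c_{t,2} \in [0, 1)$ in $\R$, so $\kappa = c_{s,1}\e + c_{t,2}\ed$ lies in the same hyperbolic interval. There is no real obstacle here; the only thing to watch is the bookkeeping that tracks which component is taken from which original contraction, and this becomes transparent once the metric and the hypothesis are split across the $\e$ and $\ed$ slots.
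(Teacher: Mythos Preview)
Your argument is correct and is precisely an explicit unpacking of the paper's one-line proof, which merely appeals to the fact that in \cite[Def.~2.8]{tebo2} a contraction on $(\D,D_{\k})$ is defined componentwise via the partial order $\preceq$; once that is granted, the two real inequalities you isolate and recombine are exactly what the paper means by ``followed at once.'' The only cosmetic point is that the ``standing convention'' you invoke (each $F_{i,j}$ depends only on the matching real coordinate) is in Section~\ref{hypnum} stated for derivable functions, whereas here it is really a consequence of the contraction inequality itself (or of the cited definition in \cite{tebo2}); you might phrase that justification accordingly.
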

\begin{proof}
The proof is followed at once because in \cite[Def. 2.8]{tebo2} every contraction is defined in a point-wise way with the hyperbolic partial order.
\end{proof}


\begin{example}
\label{exDCG}
Let the IFS generates the Sierpinski Triangle in the Fig. \ref{fg:triang} and identify every contraction with its respective hyperbolic version, which are affine hyperbolic transformation (see \cite[Sec. 2.3]{tebo2}). Then we have the hyperbolic IFS with probabilities $\{(\D, D_{\k}), F_1, F_2, F_3 \}$ where
\begin{equation}
\label{hypeftriang}
\begin{split}
F_1(\xi) & = \wt{\frac{1}{2}}\xi, \\
F_2(\xi) & = \wt{\frac{1}{2}}\xi + \frac{1}{4}\e + \frac{1}{2}\ed, \\
F_3(\xi) & = \wt{\frac{1}{2}}\xi + \frac{1}{2}\e,
\end{split}
\end{equation}
and for every $k \in \{1, 2, 3\}$ the probability $\rho_k$ is equal to $\wt{\displaystyle\frac{1}{3}}$.
\end{example}

One approximation of this hyperbolic IFS with the hyperbolic chaos game is shown in the Fig. \ref{fig:iths}.

\begin{figure}[ht]
\centering
\includegraphics[scale=0.6]{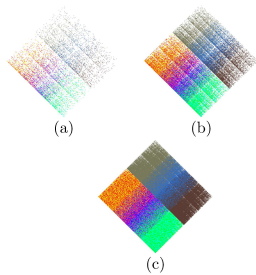}
\caption{Representation of the result to apply the hyperbolic chaos game to IFS \ref{hypeftriang} with $\rho_k=\wt{\displaystyle\frac{1}{3}}$. (a)- 10000 iterations. (b)- 100000 iterations, (c)- 1000000 iterations.}
\label{fig:iths}
\end{figure}

\begin{example}
Now we change the hyperbolic probability distribution in the Example \ref{exDCG} by
\begin{equation}\label{hpd2}
\begin{split}
\rho_1 & = \frac{1}{10}\e + \frac{1}{4}\ed, \\
\rho_2 & = \frac{3}{10}\e + \frac{1}{5}\ed, \\
\rho_3 & = \frac{3}{5}\e + \frac{11}{20}\ed,
\end{split}
\end{equation}
\end{example}
Note that we get a similar figure when we repeat the process a big quantity of times, but it has different accumulation of points in $[0, \wt{1}]_{\k}$. See Fig. \ref{fig:iths2}.

\begin{figure}[ht]
\centering
\includegraphics[scale=0.6]{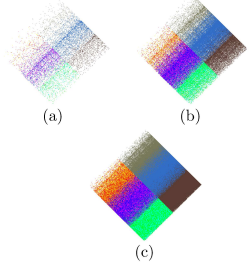}
\caption{Representation of the result to apply the $\D$-{chaos game} to IFS \ref{hypeftriang} with the hyperbolic probability distribution (\ref{hpd2}). (a)- 10000 iterations. (b)- 100000 iterations, (c)- 1000000 iterations.}
\label{fig:iths2}
\end{figure}

The similitude is due that in the hyperbolic case the attractor associated to an IFS is an equivalence class of no empty compact sets (see \cite[Sec. 3.1]{tebo2}).

Given ${\cal F}=\{F_1, ..., F_n\}$ an hyperbolic IFS over $(\D, D_{\k})$ with $\frakP$ an hyperbolic probability distribution associated to $\calF$, the number of possible combination functions $\{G_{s,t}\}$ generated by the $\D$-{chaos game} is $\{1, ..., n^2\}$ and the real probabilities considering the selection of the idempotent components of the contractions like independent events are ${\pi}_m = p_{s, 1} p_{t, 2}$ with $m \in \{1, ..., n^2\}$. Let $\wt{\frakQ}$ denotes the hyperbolic probability distribution $\{\wt{{\pi}_1},....., \wt{{\pi}_{n^2}}\}.$ 

In an alternative way, we will use the symbol $\frakK$ to denote the hyperbolic probability distribution $\{{\omega}_1,....., {\omega}_{n^2}\}$ , where ${\omega}_m =\wt{\displaystyle\frac{1}{n}} \left(p_{s, 1}\e + p_{t, 2}\ed\right).$  

Because the strong entropy is measuring the information in every component of the system with its probabilities associated the following relations hold.

\begin{displaymath}
{\bf H}_s(\frakP) \preceq \wt{H(\wt{\frakQ})},
\end{displaymath}

\begin{displaymath}
{\bf H}_s(\frakP) \preceq {\bf H}_s(\frakK).
\end{displaymath}

\section{FURTHER REMARKS}
Repeating the $\D$-{chaos game} to an IFS, a limited number of times, it is created a sequence of points in the interior of the hyperbolic interval 
$[0, \wt{1}]_{\k}$, which may be viewed as a rectangle in the Euclidean plane. These points fit into the pattern of those noise points in one image. If we take now an image (looking it as the rectangle) and we deform, by an affine transformation, until it be embedded into interval $[0, \wt{1}]_{\k}$, we can overlap the sequence of points in it, so in this overlapping a noise pattern in the image emerge. This suggests a natural question: whether the noise in an image could be removed by using the $\D$-{chaos game}. Some partial evidence supports the conjecture that the use of an analogous of the collage theorem (see \cite{barnsley}) for $\D$-{chaos game}, together with some colored technique, introduced in \cite{barnsley2}, would be used to remove noise from an image. The noise pattern is located within the image using the collage theorem to obtain an IFS. The D-chaos game is then applied over it to replace the noise pattern with appropriately colored points.

\section*{Acknowledgements}
This research was partially supported by Consejo Nacional de Ciencia y Tecnolog\'ia (CONACYT) and by Instituto Polit\'ecnico Nacional in the framework of SIP programs.

\end{document}